\documentclass[12pt,reqno]{amsart}

\newcommand\version{March 5, 2014}


\usepackage{amsmath,amsfonts,amsthm,amssymb,amsxtra}



\setlength{\voffset}{-.7truein}
\setlength{\textheight}{8.8truein}
\setlength{\textwidth}{6.05truein}
\setlength{\hoffset}{-.7truein}


\newtheorem{theorem}{Theorem}[section]
\newtheorem{proposition}[theorem]{Proposition}

\newtheorem{corollary}[theorem]{Corollary}

\theoremstyle{definition}

\theoremstyle{remark}


\numberwithin{equation}{section}


\renewcommand{\epsilon}{\varepsilon}

\renewcommand{\phi}{\varphi}
\newcommand{\R}{\mathbb{R}}


\begin{document}

\title[A smooth, complex generalization of the Hobby-Rice
theorem
\version]{A smooth, complex generalization of the Hobby-Rice
theorem}

\author{Oleg Lazarev}
\address{Oleg Lazarev, Department of Mathematics,
Princeton University, Washington Road, Princeton, NJ, 08542, USA.}
\address{Present address: 
Department of Mathematics,
Stanford University, Building 380, Stanford, CA 94305, USA}
\email{olazarev@stanford.edu}

\author{Elliott H. Lieb}
\address{Elliott H. Lieb, Departments of Mathematics and Physics,
Princeton University,  Washington Road, Princeton, NJ
08544, USA} \email{lieb@princeton.edu} 
\date{March 27, 2014}

\thanks{\copyright\, 2012 by the authors. This paper may be
reproduced, in its entirety, for non-commercial purposes.\\
Work partially supported by NSF grant PHY--0965859
(E.H.L.)}

\thanks{Published in Indiana Univ. Math. Jour. {\bf 62}, 1133-1141 (2013).
DOI: 10.1512/iumj.2013.62.5062.}

\begin{abstract}

The Hobby-Rice Theorem states that, given $n$ functions $f_j$ on
$\R^N$, there exists
a multiplier $h$ such that the integrals of $f_jh$ are all simultaneously
zero. This multiplier takes values~$\pm1$ and is discontinuous. We show how
to find a multiplier $h=e^{ig}$ that is infinitely differentiable, takes values on the
unit circle,
and is such that the integrals of $f_jh$ are all zero.
We also show the existence of $n$ infinitely differentiable,
real functions $g_j$ such that the 
$n$ functions $f_j e^{ig_j}$  are pairwise orgthogonal.

\end{abstract}

\maketitle

\section{Introduction}

The purpose of this paper is to generalize the Hobby-Rice Theorem \cite{HR}
(see also the later, shorter proof by Pinkus \cite{P}), which states that,
given $n$ integrable real-valued functions $f_1,\dots,f_n$ on $[0,1]$, it is
possible to find $0=\alpha_0<\alpha_1<\dots<\alpha_r<\alpha_{r+1}=1$ (with
$r\le n$) such that
\begin{equation}\label{eqn: 01}
\sum_{m=1}^{r+1}(-1)^m  \int_{\alpha_{m-1}}^{\alpha_m}f_j(x)dx=0
\end{equation}
for \emph{all} $1\le j\le n$.

That is, the Hobby-Rice Theorem says that there exists
$g:[0,1]\mapsto\{0,\pi\}$ such that
\begin{equation}\label{eqn: 02}
\int_0^1f_j(x)e^{ig(x)}dx=0
\end{equation}
for $1\le j\le n$. This $g$ is discontinuous, however. The purpose of this
paper is to generalize the Hobby-Rice Theorem by letting $g\in
C_c^{\infty}((0,1);\mathbb{R})$. In particular, this implies that if
$f_1,\dots,f_n\in H^1([0,1])$, then $f_j e^{ig}\in H^1([0,1])$ as well.
\smallskip

If the functions
$f_j$ are complex-valued, we can consider them to be $2n$ real-valued
functions; thus, \eqref{eqn: 01}, \eqref{eqn: 02} continue to hold except
that $r\le2n$. The words `complex generalization' in our title do \emph{not}
refer to this trivial generalization, but rather, to the fact that the
multiplier is a smooth function on the unit circle in the complex plane.

There seems to be no way to adapt the proof of the Hobby-Rice Theorem (which
involves a fixed-point argument) to find a smooth $g$. Instead, we start
with the discontinuous $g$ given by the Hobby-Rice Theorem, and modify $g$
to get a smooth function. Our proof is more complicated than desired, and we
offer it as a challenge for simplification. In particular, we have no
estimate on the derivatives of $g$, and therefore, we cannot estimate the
$H^1$ norm of $f_j e^{ig}$ in terms of the $H^1$ norm of $f_j$. 
Vermont  Rutherfoord has extended our results in \cite{R}.

Our main theorem is the following.

 \begin{theorem}\label{thm: 1}
Let $f_1,\dots,f_n\in L^1([0,1])$ be real-valued. Then, there exists $g\in
C_c^{\infty}(0,1)$, real-valued, such
that
\begin{equation}\label{eqn: 0}
\int_0^1f_j(x)e^{ig(x)}dx=0
\end{equation}
for $1\le j\le n$.
\end{theorem}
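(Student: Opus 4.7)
The plan is to start from the discontinuous step function provided by the classical Hobby-Rice theorem, smooth it while keeping the integrals close to zero, and then kill the residual error by a finite-dimensional perturbation handled with an inverse function argument.

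First I apply the classical Hobby-Rice theorem to $f_1,\dots,f_n$ (which I may assume linearly independent in $L^1$) to obtain a step function $g_0:[0,1]\to\{0,\pi\}$ with $r\le n$ jumps $\alpha_1<\dots<\alpha_r$ and $\int_0^1 f_j e^{ig_0}=0$ for all $j$; this is only a real constraint since $e^{ig_0}\in\{\pm 1\}$. For small $\epsilon>0$ I replace $g_0$ by $g_\epsilon\in C_c^\infty((0,1);\R)$ that agrees with $g_0$ outside $\bigcup_m[\alpha_m-\epsilon,\alpha_m+\epsilon]$ and interpolates smoothly and monotonically through $[0,\pi]$ inside each such interval. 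Then
\[
r_j^\epsilon:=\int_0^1 f_j e^{ig_\epsilon}=\int_0^1 f_j\bigl(e^{ig_\epsilon}-e^{ig_0}\bigr)\longrightarrow 0\quad\text{as }\epsilon\to 0,
\]
by dominated convergence, since $|e^{ig_\epsilon}-e^{ig_0}|\le 2$ on a set of measure $2r\epsilon$ and $f_j\in L^1$.

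Next I seek the required $g$ as $g=g_\epsilon+\sum_{k=1}^{2n}t_k\psi_k$ with real parameters $t_k$ and bumps $\psi_k\in C_c^\infty((0,1);\R)$ to be chosen. The smooth map $\Phi_\epsilon:\R^{2n}\to\C^n$, $t\mapsto\bigl(\int_0^1 f_j e^{ig_\epsilon+i\sum_l t_l\psi_l}\bigr)_j$, has Jacobian at $t=0$ with columns $D_k=i\bigl(\int f_j\psi_k e^{ig_\epsilon}\bigr)_j$. I would split the bumps into two groups: $\psi_1,\dots,\psi_n$ as thin bumps supported where $g_\epsilon=0$, making $D_k=i(\int f_j\psi_k)_j$ purely imaginary; and $\psi_{n+1},\dots,\psi_{2n}$ centered near points where $g_\epsilon=\pi/2$ inside the transition intervals, making $D_k\approx-(\int f_j\psi_k)_j$ essentially real. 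Each block is an $n\times n$ real matrix $(\int f_j\psi_k)_{j,k}$, invertible once the bumps are centered at $n$ joint Lebesgue points where the vectors $(f_j(x_k))_j$ are $\R$-linearly independent; such points exist in the first region (of measure $\ge 1-O(\epsilon)$) by the $L^1$-independence of the $f_j$ on $[0,1]$. If this works for both groups, the inverse function theorem produces small $t=t(\epsilon)$ with $\Phi_\epsilon(t(\epsilon))=0$, and $g:=g_\epsilon+\sum t_k(\epsilon)\psi_k$ is the required smooth function.

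The main obstacle is the quantitative execution of the preceding paragraph. The transition-region bumps $\psi_{n+1},\dots,\psi_{2n}$ are trapped in intervals of length $2\epsilon$, so with fixed $L^\infty$-normalization the corresponding Jacobian entries are $O(\epsilon)$, forcing $\|(D\Phi_\epsilon(0))^{-1}\|=O(\epsilon^{-1})$, while $|r^\epsilon|$ is only $o(1)$ with no effective modulus under the bare hypothesis $f_j\in L^1$. I would rescale these bumps to height $O(\epsilon^{-1})$ so that their $L^1$-norms (and the resulting Jacobian entries) stay of order one; but then the corrections $t_k\psi_k$ are large in $C^\infty$ for every small $t_k$, and the nonlinear remainder of $\Phi_\epsilon$ must be controlled by hand. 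A second subtlety is that on a single transition interval the $f_j$'s may fail to be linearly independent, requiring one to spread the bumps across several transitions and perhaps---by applying Hobby-Rice to an augmented family of functions---to first force the list of transition points to be sufficiently large and generic. Carrying out this delicate balance, perhaps via a Brouwer-degree argument rather than a clean inverse function theorem, or by handling the two groups of bumps sequentially, is presumably the source of the complication that the authors acknowledge.
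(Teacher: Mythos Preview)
Your framework---smooth the Hobby--Rice step function, then correct the residual by an inverse function argument with $2n$ bump parameters---is exactly the one the paper uses, and you have correctly located the real obstruction: with a single application of Hobby--Rice the multiplier $e^{ig_\epsilon}$ is real except on transition intervals of total measure $O(\epsilon)$, so the $n$ ``real'' columns of the Jacobian are either $O(\epsilon)$ small or require bumps of height $O(\epsilon^{-1})$, and the inverse function theorem does not close. The paper's resolution is not a more delicate scaling or a degree argument; it is a structural trick that you are missing. One first picks $p\in(0,1)$ and applies Hobby--Rice \emph{separately} on $[0,p]$ and on $[p,1]$, then combines the two step functions as $g_0$ with $e^{ig_0}\in\{\pm 1\}$ on $[0,p]$ and $e^{ig_0}\in\{\pm i\}$ on $[p,1]$. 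Now there is a \emph{macroscopic} region $[p,1]$ on which $ie^{ig_0}$ is real, so the bumps $h_{n+1},\dots,h_{2n}$ giving the real block of the Jacobian can live comfortably in $(p,1)$ with no $\epsilon$-dependence at all; likewise $h_1,\dots,h_n$ live in $(0,p)$ and give the imaginary block. One further takes all the $h_k$ to vanish on fixed $\epsilon_0$-neighborhoods of the jump points, so that for $\epsilon<\epsilon_0$ the supports of $g_\epsilon-g_0$ and of the $h_k$ are disjoint; this makes $Q_\epsilon(u)=Q_0(u)+C(\epsilon)$ \emph{exactly}, with $C(\epsilon)\to 0$, and the inverse function theorem applies cleanly to $Q_0$ alone.

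This construction requires that the $f_j$ be linearly independent on \emph{both} $[0,p]$ and $[p,1]$ (so that each $n\times n$ block can be made invertible by choice of $h$'s), which is a genuine extra hypothesis. The paper handles the general case by a dichotomy and induction on $n$: either such a $p$ exists, or there is a point $q$ with the $f_j$ linearly dependent on each of $[0,q]$ and $[q,1]$, in which case one applies the $(n-1)$-function result on each side and glues the two compactly supported $g$'s at $q$. Your proposal, as it stands, does not contain this two-interval device or the accompanying induction, and without them the scaling obstacle you identified is real and unresolved.
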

As noted before, the validity of Theorem \ref{thm: 1} for real-valued
functions implies its validity for complex-valued functions. The following
is also an important corollary of Theorem \ref{thm: 1}.

\begin{corollary}\label{cor: 1}
If the functions in Theorem \ref{thm: 1} are taken to be in
$L^1(\mathbb{R}^N)$ instead of $L^1([0,1])$, then there exists a real-valued
$g\in C^{\infty}_c(\mathbb{R}^1)$
so that
\begin{equation}\label{eqn: 00}
\int_{\mathbb{R}^N}f_j(x)e^{ig(x_1)}dx=0
\end{equation}
for $1\le j\le n$.
\end{corollary}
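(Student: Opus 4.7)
The plan is to reduce the $N$-dimensional problem to the one-dimensional problem on $[0,1]$ that is solved by Theorem \ref{thm: 1}, in two steps: first integrate out the transverse variables with Fubini, then pull the resulting problem on $\R$ back to $(0,1)$ with a smooth diffeomorphism.

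First, since the desired $g$ depends only on $x_1$, observe that for any real-valued $g\in C_c^{\infty}(\R)$,
\[
\int_{\R^N}f_j(x)e^{ig(x_1)}dx=\int_\R F_j(x_1)e^{ig(x_1)}dx_1,
\qquad
F_j(x_1):=\int_{\R^{N-1}}f_j(x_1,x_2,\dots,x_N)\,dx_2\cdots dx_N.
\]
By Fubini, each $F_j$ is a real-valued function in $L^1(\R)$. Thus the corollary reduces to finding a real-valued $g\in C_c^\infty(\R)$ with $\int_\R F_j(x_1)e^{ig(x_1)}dx_1=0$ for all $j$.

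Next, choose any smooth diffeomorphism $\phi:(0,1)\to\R$ (for instance, $\phi(t)=\tan\!\bigl(\pi(t-1/2)\bigr)$). Setting $x_1=\phi(t)$ and $\widetilde g=g\circ\phi$, the change of variables gives
\[
\int_\R F_j(x_1)e^{ig(x_1)}dx_1=\int_0^1 \widetilde F_j(t)\,e^{i\widetilde g(t)}dt,
\qquad
\widetilde F_j(t):=F_j(\phi(t))\,\phi'(t).
\]
Each $\widetilde F_j$ is real-valued and lies in $L^1([0,1])$, since $\int_0^1|\widetilde F_j|\,dt=\int_\R|F_j|\,dx_1<\infty$. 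Theorem \ref{thm: 1} therefore yields a real-valued $\widetilde g\in C_c^\infty((0,1))$ such that $\int_0^1\widetilde F_j(t)e^{i\widetilde g(t)}dt=0$ for $1\le j\le n$.

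Finally, define $g:=\widetilde g\circ\phi^{-1}:\R\to\R$. Since $\phi^{-1}$ is a smooth diffeomorphism and $\widetilde g$ is smooth, $g$ is smooth; and since $\supp\widetilde g$ is a compact subset of $(0,1)$, $\supp g=\phi(\supp\widetilde g)$ is compact in $\R$. Hence $g\in C_c^\infty(\R)$, and reversing the change of variables yields \eqref{eqn: 00}. There is essentially no obstacle: the only thing to verify carefully is that both the Fubini reduction and the $\phi$-pullback preserve real-valuedness and $L^1$-integrability, and that smoothness with compact support is preserved under composition with $\phi^{-1}$.
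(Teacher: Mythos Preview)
Your proof is correct and follows essentially the same approach as the paper: reduce from $\mathbb{R}^N$ to $\mathbb{R}$ by Fubini, and from $\mathbb{R}$ to $(0,1)$ by a smooth diffeomorphism, then apply Theorem \ref{thm: 1}. The paper simply reverses the order of these two reductions and uses the sigmoid $x\mapsto(e^{-x}+1)^{-1}$ in place of your $\tan(\pi(t-1/2))$, but the argument is the same.
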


 \begin{proof}[Proof of the corollary]
First, to reduce the $L^1(\mathbb{R}^1)$ version to the $L^1([0,1])$
version, we map $x$ to $(e^{-x}+1)^{-1}$ so that $(-\infty,\infty)$ maps to
$[0,1]$, and we absorb the Jacobian into the functions on $[0,1]$. That is,
we have
\begin{equation}
\int_{\mathbb{R}^1}f_j(x)\exp(ig((e^{-x}+1)^{-1}))dx=\int_0^1\frac{
f_j(-\ln(x^{-1}-1))}{x(1-x)}e^{ig(x)}dx.
\end{equation}
We then apply Theorem \ref{thm: 1} to $f_j(-\ln(x^{-1}-1))/(x(1-x))$ to find
$g(x)$. The solution to the problem on $\mathbb{R}^1$ will then be
$g((e^{-x}+1)^{-1})$.

To reduce the $L^1(\mathbb{R}^N)$ version to the $L^1(\mathbb{R}^1)$
version, we let
\begin{equation}
F_j(x)=\int_{\mathbb{R}^{N-1}}f_j(x,x_2,\dots,x_N)dx_2\cdots dx_N,
\end{equation}
and then apply the $L^1(\mathbb{R}^1)$ version of Theorem \ref{thm: 1} to
$F_j(x)$, obtaining $g(x)$.
$g(x_1,\dots,x_N)=g(x_1)$. This function does not have compact support in
$\mathbb{R}^N$, but is in $C^{\infty}(\mathbb{R}^N)$ and is bounded.
\end{proof}

Theorem \ref{thm: 1} and Corollary \ref{cor: 1} can be used as follows to
smoothly orthogonalize any set of any $n$ functions without changing their
moduli.
\begin{corollary}\label{cor: 3}
Let $f_1, \ldots, f_n\in L^2(\mathbb{R}^N)$. Then, there exist $n$ real
functions $g_1, \ldots, g_n \in {C}^\infty_c (\mathbb{R}^1)$ such that 
the $n$ functions $\phi_j(x) := f_j(x) \exp\{i g_j(x_1)\}$ are pairwise
orthogonal.
\end{corollary}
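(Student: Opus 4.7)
The plan is to prove Corollary \ref{cor: 3} by induction on $n$, at each stage choosing $g_k$ so that $\phi_k = f_k e^{ig_k(x_1)}$ is orthogonal to all previously constructed $\phi_1,\dots,\phi_{k-1}$. The key point is that each individual orthogonality condition has exactly the form handled by Corollary \ref{cor: 1}, so the induction step reduces cleanly to one application of that corollary with more vanishing conditions.

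Concretely, start by setting $g_1 \equiv 0$, so $\phi_1 = f_1$. Now suppose $g_1,\dots,g_{k-1} \in C_c^\infty(\mathbb{R}^1)$ and $\phi_1,\dots,\phi_{k-1}$ have been constructed with the desired pairwise orthogonality. To construct $g_k$, observe that the required condition $\langle \phi_j, \phi_k\rangle = 0$ for $j = 1,\dots,k-1$ is
\[
\int_{\mathbb{R}^N} \overline{\phi_j(x)}\, f_k(x)\, e^{ig_k(x_1)}\, dx = 0.
\]
Define the complex functions $h_j(x) := \overline{\phi_j(x)} f_k(x)$ for $1 \le j \le k-1$. Since $\phi_j, f_k \in L^2(\mathbb{R}^N)$, the Cauchy--Schwarz inequality gives $h_j \in L^1(\mathbb{R}^N)$. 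Splitting each $h_j$ into real and imaginary parts yields $2(k-1)$ real functions in $L^1(\mathbb{R}^N)$, and Corollary \ref{cor: 1} (which handles any finite collection of real $L^1$ functions) produces a real $g_k \in C_c^\infty(\mathbb{R}^1)$ such that
\[
\int_{\mathbb{R}^N} h_j(x)\, e^{ig_k(x_1)}\, dx = 0 \quad \text{for } 1 \le j \le k-1,
\]
which is exactly the required orthogonality.

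Iterating this procedure for $k=2,3,\dots,n$ produces $g_1,\dots,g_n$ with the claimed property, completing the induction. The main ``obstacle'' is simply recognizing the setup: one must verify that the complex cross products $\overline{\phi_j} f_k$ lie in $L^1(\mathbb{R}^N)$ (immediate from $L^2 \times L^2 \subset L^1$) and that passing to real and imaginary parts turns each complex orthogonality requirement into two of the real vanishing-integral conditions to which Corollary \ref{cor: 1} applies. No estimates on derivatives are needed, consistent with the paper's earlier remark that the construction does not provide such estimates.
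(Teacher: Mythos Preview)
Your proof is correct and is essentially the same as the paper's: both arguments fix one phase to be zero and then iteratively choose each subsequent $g_k$ via Corollary~\ref{cor: 1} so that the new $\phi_k$ is orthogonal to the previously constructed $\phi_j$'s, the only cosmetic difference being that the paper iterates from $k=n$ down to $k=1$ while you iterate from $k=1$ up to $k=n$. Your write-up is in fact more explicit than the paper's, spelling out the $L^2\times L^2\subset L^1$ step and the real/imaginary splitting needed to invoke Corollary~\ref{cor: 1}.
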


\begin{proof}[Proof of Corollary]
Use Theorem \ref{thm: 1} as follows.

Choose $g_n(x) =0$. Choose $g_{n-1} $ so that  $\int f_{n-1}^* \phi_n
e^{-ig_{n-1}} =0$. Next, choose $g_{n-2}$ so that the two integrals $\int
f_{n-2}^* \phi_j e^{-ig_{n-2}} =0 $ for $j= n$ and $j= n-1$. Similarly,
determine $g_{n-3}$ so that three integrals vanish, and so on, finishing
with  $\int  f_1^* \phi_j e^{-ig_{1}}  =0$ for $j=2,\dots n$.
\end{proof}

 \subsection{Acknowledgements.}
The authors are grateful to Rupert Frank for helpful comments and to Robert
Schrader for motivating our work and for his constant encouragement. The
motivation for our work is to prove a necessary ingredient in a proof of a
theorem in the `density functional theory' of quantum mechanics \cite{LS}.
Specifically, we want to know that, given a one-particle density and a
one-particle current density, we can find $n$ orthonormal functions in
$H^1(\mathbb{R}^3)$ such that the sums of their individual densities and
currents equal the given values.


\section{Proof of Theorem \ref{thm: 1}}

To prove Theorem \ref{thm: 1}, we start with the following special case of
that theorem, which will be used in an inductive proof.

\begin{theorem}\label{thm: 2}
Suppose the conditions of Theorem \ref{thm: 1} hold, but with the additional
assumption that there exists $p\in(0,1)$ such that the functions
$f_1,\dots,f_n$ are linearly independent on $[0,p]$ and on $[p,1]$. Then,
the conclusion of Theorem \ref{thm: 1} is true.
\end{theorem}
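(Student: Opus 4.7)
The plan is to combine the Hobby-Rice theorem with a phase-rotation trick on the two halves $[0,p]$ and $[p,1]$, regularize the resulting step function, and apply an implicit-function correction whose linearization is shown to be surjective using the hypothesis of linear independence on each half. First, apply the classical Hobby-Rice theorem separately to $f_1,\dots,f_n$ on $[0,p]$ and on $[p,1]$, producing step functions $g_0^\ell\colon[0,p]\to\{0,\pi\}$ and $g_0^r\colon[p,1]\to\{0,\pi\}$ with $\int_0^p f_j e^{ig_0^\ell}\,dx=0$ and $\int_p^1 f_j e^{ig_0^r}\,dx=0$ for each $j$. Define $\widetilde g_0(x):=g_0^\ell(x)$ on $[0,p]$ and $\widetilde g_0(x):=g_0^r(x)+\tfrac{\pi}{2}$ on $[p,1]$; then $e^{i\widetilde g_0}$ takes values $\pm 1$ on $[0,p]$ and $\pm i$ on $[p,1]$, and $\int_0^1 f_j e^{i\widetilde g_0}\,dx = 0 + i\cdot 0 = 0$ for each $j$.

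Next, for small $\delta>0$ choose $g_\delta\in C_c^\infty(0,1)$ equal to $\widetilde g_0$ outside $\delta$-neighborhoods of its finitely many jumps (including the new $\tfrac{\pi}{2}$-jump at $p$) and of the endpoints $0,1$, and interpolating smoothly and monotonically within these neighborhoods. Absolute continuity of the integral gives $\epsilon(\delta):=\bigl(\int_0^1 f_j e^{ig_\delta}\,dx\bigr)_{j=1}^n\to 0$ in $\mathbb{C}^n$ as $\delta\to 0$. We seek $g = g_\delta + \psi$ with $\psi\in C_c^\infty(0,1)$ real and small; the $\mathbb{R}$-linear Fr\'echet derivative at $\psi=0$ of the nonlinear map $\Psi\mapsto\bigl(\int f_j e^{i(g_\delta+\Psi)}\bigr)_j$ is
\[
L(\Psi)\;=\;\Bigl(\,i\!\int_0^1 f_j\,e^{ig_\delta}\,\Psi\,dx\Bigr)_{j=1}^n.
\]
One then wants $L$, restricted to a suitable finite-dimensional test-function space, to surject onto $\mathbb{C}^n=\mathbb{R}^{2n}$, so that the implicit function theorem produces the required $\psi$ of size comparable to $\|\epsilon(\delta)\|$.

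The surjectivity of $L$, which is the heart of the argument, is exactly where the phase rotation pays off. Decompose $\Psi=\Psi_1+\Psi_2$ with $\supp\Psi_1\subset(0,p-\delta)$ and $\supp\Psi_2\subset(p+\delta,1)$, both chosen to avoid the smoothing windows. On $\supp\Psi_1$ one has $e^{ig_\delta}\in\{\pm 1\}$, so $L(\Psi_1)\in i\mathbb{R}^n$; on $\supp\Psi_2$ one has $e^{ig_\delta}\in\{\pm i\}$, so $L(\Psi_2)\in\mathbb{R}^n$. A standard compactness/continuity argument promotes the hypothesis of $\mathbb{R}$-linear independence of $\{f_j\}$ on $[0,p]$ to linear independence on the good (non-smoothed) part of $(0,p-\delta)$ for all sufficiently small $\delta$, and likewise on $(p+\delta,1)$; this makes the image of $\Psi_1\mapsto L(\Psi_1)$ fill $i\mathbb{R}^n$ and the image of $\Psi_2\mapsto L(\Psi_2)$ fill $\mathbb{R}^n$, so $L$ is onto. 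The main obstacle I would expect is quantitative: one must control the norm of a right inverse of $L$ uniformly in $\delta$ (or at least avoid degeneration faster than $\|\epsilon(\delta)\|$ shrinks) so that IFT genuinely applies at the true error, and this is where the hypothesis of independence on \emph{both} halves, rather than just on the full interval, is essential.
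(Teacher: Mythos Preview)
Your approach is essentially identical to the paper's: Hobby--Rice on each half, the $\pi/2$ phase shift so that $e^{ig_0}$ is real on $[0,p]$ and imaginary on $[p,1]$, a smooth approximation $g_\delta$, and an inverse/implicit function correction using test functions supported on one half and away from the smoothing windows, with the block-diagonal structure of the linearization coming from the real/imaginary split.

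The ``obstacle'' you flag at the end is not actually an obstacle, and the paper's resolution is already implicit in your own setup. Fix the finite-dimensional test space \emph{once}, independent of $\delta$: choose $h_1,\dots,h_n\in C_c^\infty(0,p)$ and $h_{n+1},\dots,h_{2n}\in C_c^\infty(p,1)$ vanishing on fixed $\epsilon_0$-neighborhoods of the jump points of $\widetilde g_0$ (and of $0,1,p$), with the resulting Jacobian nonsingular; this is possible by linear independence on each half and continuity of the determinant under shrinking supports. Then for every $\delta<\epsilon_0$ the supports of the $h_m$ and of $g_\delta-\widetilde g_0$ are disjoint, so not only the linearization but the full nonlinear map satisfies
\[
u\;\longmapsto\;\Bigl(\int_0^1 f_j\,e^{i(g_\delta+\sum_m u_m h_m)}\,dx\Bigr)_j
\;=\;Q_0(u)+C(\delta),
\]
where $Q_0$ is \emph{independent of $\delta$} and $C(\delta)\to 0$. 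The inverse function theorem is applied once to $Q_0$ at $u=0$, giving a fixed radius $R$ such that $Q_0$ hits every point of $B_R(0)$; then one simply takes $\delta$ small enough that $\|C(\delta)\|<R$ and solves $Q_0(u)=-C(\delta)$. No uniformity estimate in $\delta$ is needed.
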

We begin by proving Theorem \ref{thm: 2}.

\begin{proof}
By the Hobby-Rice theorem for $f_1,\dots,f_n$ restricted to $[0,p]$, there
exist $\{\alpha_m\}_{m=1}^r$, $r\le n$,
$0=\alpha_0<\alpha_1<\dots<\alpha_r<\alpha_{r+1}=p$ such that, for $1\le
j\le n$,
\begin{equation}
\sum_{m=1}^{r+1}(-1)^m\int_{\alpha_{m-1}}^{\alpha_m}f_j(x)dx=0.
\end{equation}
Similarly, by the Hobby-Rice theorem for $f_1,\dots,f_n$ restricted to
$[p,1]$, there exist $\{\beta_m\}_{m=1}^s,\ s\le n,\
p=\beta_0<\beta_1<\dots<\beta_s<\beta_{s+1}=1$ such that, for $1\le j\le n$,
\begin{equation}
\sum_{m=1}^{s+1}(-1)^m\int_{\beta_{m-1}}^{\beta_m}f_j(x)dx=0.
\end{equation}
Therefore, we have
\begin{equation}\label{eqn: 1}
0=\bigg(\sum_{m=1}^{r+1}
(-1)^m\int_{\alpha_{m-1}}^{\alpha_m}f_j(x)\\dx\bigg)+i\bigg(\sum_{m=1}^{s+1}
(-1)^m\int_{\beta_{m-1}}^{\beta_m}f_j(x)dx\bigg)
\end{equation}
for $1\le j\le n$. Writing the $\pm1,\pm i$ in complex polar coordinates, we
have
\begin{equation}\label{eqn: 22}
\int_0^1f_j(x)e^{ig_0(x)}dx=0,
\end{equation}
where
\begin{equation}
g_0(x)=\begin{cases}
0&:x\in[\alpha_m,\alpha_{m+1}),\ m\mbox{ odd},\\
\pi&:x\in[\alpha_m,\alpha_{m+1}),\ m\mbox{ even},\\
\pi/2&:x\in[\beta_m,\beta_{m+1}),\ m\mbox{ odd},\\
3\pi/2&:x\in[\beta_m,\beta_{m+1}),\ m\mbox{ even}.\\
\end{cases}
\end{equation}
Note that $e^{ig_0}$ is real to the left of $p$ and imaginary to the right
of $p$.

Even though the integrals in \eqref{eqn: 22} are zero, the problem is that
$g_0$ is not smooth (or even continuous) at the points $\alpha_m$ or
$\beta_m$.
Therefore, for $\epsilon>0$, we now consider an approximation
$g_{\epsilon}\in C_c^{\infty}((0,1); \mathbb{R})$ to $g_0$ such that
$g_{\epsilon}$ differs from $g_0$ only in $\epsilon$-intervals around the
points $\alpha_m$ and $\beta_m$,
and moreover, such that
$0\le g_{\epsilon}(x)<2\pi$. Since the functions $f_j$
are integrable, for $1\le j\le n$
\begin{equation}\label{eqn: 2}
\bigg|\int_0^1f_j(x) e^{i
g_{\epsilon}(x)}dx\bigg|=\bigg|\int_0^1f_j(x)(e^{ig_{\epsilon}(x)}-e^{
ig_0(x)})dx\bigg|\to0
\quad\mbox{ as }\epsilon\to0.
\end{equation}

We would like to replace $g_0$ by $g_{\epsilon}$, which will be smooth, but
then the integrals in \eqref{eqn: 22} will no longer be equal to zero.
Therefore, we shall further perturb $g_{\epsilon}$ by smooth functions so as
to make the integrals in \eqref{eqn: 22} zero again.

To do so, consider functions $h_1$, $\dots$, $h_n\in
C_c^{\infty}((0,p);\mathbb{R})$ and $h_{n+1}$, $\dots$, $h_{2n}\in
C_c^{\infty}((p,1);\mathbb{R})$. For such functions $h$,
define $F_{\epsilon}^j:\mathbb{R}^{2n}\mapsto\mathbb{C}$ by
\begin{equation}
F_{\epsilon}^j(u)=\int_0^1f_j(x)\exp\Big(ig_{\epsilon}(x)+i\sum_{m=1}^{2n}
u_mh_m(x)\bigg)dx
\end{equation}
where $u=(u_1,\dots,u_{2n})\in\mathbb{R}^{2n}$. Also, define
$Q_{\epsilon}:\mathbb{R}^{2n}\mapsto\mathbb{R}^{2n}$ by
\begin{equation}
Q_{\epsilon}(u)=(\Im(F_{\epsilon}^1(u)),\dots,\Im(F_{\epsilon}^n(u)),\Re(F_{
\epsilon}^1(u)),\dots,\Re(F_{\epsilon}^n(u))).
\end{equation}
Note that $Q_0(0)=0$, because of \eqref{eqn: 22}. If $\|\cdot\|$ is the
Euclidean norm on $\mathbb{R}^{2n}$, then, by \eqref{eqn: 2},
$\|Q_{\epsilon}(0)\|\to0$ as $\epsilon\to0$.

To show that we can find $u$ to compensate the $g_0$ to $g_{\epsilon}$
perturbation, we start by showing that $u\mapsto Q_0(u)$ is invertible in a
neighborhood of $u=0$ for \emph{some} $h_1,\dots,h_{2n}$. To do this, we use
the inverse function theorem, which requires showing that the Jacobian
matrix $DQ_0(0)$ has a non-zero determinant. Note that the partial
derivatives are
\begin{equation}\label{eqn3}
\frac{\partial F_0^j}{\partial
u_k}=i\int_0^1f_j(x)h_k(x)\exp\Big(ig_0(x)+i\sum_{m=1}^{2n}u_mh_m(x)\Big)dx
,
\end{equation}
which are continuous in $u$ on $\mathbb{R}^{2n}$. In particular, at $u=0$,
the partial derivatives are
\begin{equation}
\frac{\partial F_0^j}{\partial
u_k}\bigg|_{u=0}=i\int_0^1f_j(x)h_k(x)e^{ig_0(x)}dx.
\end{equation}

Also, note that we constructed $g_0$ so that $ie^{g_0}$ is imaginary in
$[0,p]$, the support of $h_1,\dots,h_n$, and, similarly, so that $ie^{g_0}$
is real in $[p,1]$, the support of $h_{n+1},\dots,h_{2n}$. Therefore,
$(\partial F_0^j/\partial u_k)|_{u=0}$ is imaginary for $1\le k\le n$ and
real for $n+1\le k\le2n$. As a result, the Jacobian matrix is
block-diagonal:
\[
DQ_0(0)=\begin{pmatrix}A&0\\0&B\\\end{pmatrix},
\]
where $A,B$ are $n\times n$ matrices with entries
\begin{align}
A_{jk}&=\Im\left(\frac{\partial F_0^j}{\partial
u_k}\right)\bigg|_{u=0}=\int_0^pf_j(x)h_k(x)e^{ig_0(x)}dx\\
B_{jk}&=\Re\left(\frac{\partial F_0^j}{\partial
u_k}\right)\bigg|_{u=0}=i\int_p^1f_j(x)h_{k+n}(x)e^{ig_0(x)}dx.
\end{align}

By Proposition \ref{prop: 1}, which we will prove later, there do exist
real-valued functions
$h_1$, $\dots$, $h_n\in C_c^{\infty}((0,p);\mathbb{R})$ and $h_{n+1}$,
$\dots$, $h_{2n}\in C_c^{\infty}((p,1);\mathbb{R})$ such that
$\det{DQ_0(0)}\ne0$. Proposition \ref{prop: 1} is where we use the condition
of Theorem \ref{thm: 2} that the functions $f$
are linearly independent in $[0,p]$ and in $[p,1]$. Also, since the
determinant of a matrix is a continuous function of the matrix entries, we
may assume that there exists $\epsilon_0>0$ such that the functions $h$
vanish in $\epsilon_0$-intervals of the points $\alpha_m$ and $\beta_m$,
and that $\det{DQ_0(0)}$ is still not equal to zero.
While this requirement may seem like a large perturbation of the functions
$h$,
it is a small perturbation of integrals defining $A_{jk}$ and $B_{jk}$.

The crucial point about our construction is that for $\epsilon<\epsilon_0$,
the supports of $g_{\epsilon}-g_0$ and the functions $h$
are now disjoint! By using this fact, we will show that $Q_{\epsilon}$ can
be written as
\begin{equation}\label{eqn: 4}
Q_{\epsilon}(u)=Q_0(u)+C(\epsilon),\quad\mbox{for }\epsilon<\epsilon_0,
\end{equation}
where $C(\epsilon)$ is independent of $u$ and $\|C(\epsilon)\|\to0$ as
$\epsilon\to0$. Thus, we have decoupled the smoothing of the jump in $g_0$
(which is the $C(\epsilon)$ term) from the perturbation of constant parts of
$g_0$ (which is the $Q_0(u)$ term). It is then easy to use the latter to
compensate for the former.

To prove \eqref{eqn: 4}, we rewrite $F_j^{\epsilon}$ as
\begin{equation}
F_j^{\epsilon}(u)=F_j^0(u)+(F_j^{\epsilon}(u)-F_j^0(u)).
\end{equation}
Note that $F_j^{\epsilon}(u)-F_j^0(u)$ does not depend on $u$ when
$\epsilon<\epsilon_0$. This is because
\begin{equation}
F_j^{\epsilon}(u)-F_j^0(u)=\int_0^1f_j(x)\exp\Big(i\sum_{m=1}^{2n}
u_mh_m(x)\Big)(e^{ig_{\epsilon}(x)}-e^{ig_0(x)})dx,
\end{equation}
and $e^{i g_{\epsilon}(x)} - e^{i g_0(x)}$ is non-zero only on
$\epsilon$-intervals centered at the points $\alpha_m$ and $\beta_m$. On the
other hand, the functions $h$ vanish on $\epsilon_0$-intervals centered at
the points $\alpha_m$ and $\beta_m$.
Therefore, if $C(\epsilon)\in\mathbb{R}^{2n}$ denotes the real and imaginary
parts of $F_j^{\epsilon}(u)-F_j^0(u)$, we have \eqref{eqn: 4}. Also, note
that we have $\|C(\epsilon)\|\to0$ as $\epsilon\to0$ because of \eqref{eqn:
2}.

We next explain how to use the inverse function theorem to carry out the
compensation. Since $Q_0(0)=0$ and $\det DQ_0(0)\ne0$ and the partial
derivatives of $Q_0$ are continuous, the theorem guarantees the existence of
a ball $B_R(0)\subset\mathbb{R}^{2n}$ of radius $R>0$ around $0$ such that,
when $y\in B_R(0)$, there exists $u\in\mathbb{R}^{2n}$ such that $Q_0(u)=y$.

As stated in \eqref{eqn: 4}, for $\epsilon<\epsilon_0$, we see that
$Q_{\epsilon}$ and $Q_0$ differ by a constant depending only on $\epsilon$.
Thus, for every $y\in B_R(Q_{\epsilon}(0))$, there exists
$u\in\mathbb{R}^{2n}$ such that $Q_{\epsilon}(u)=y$. Now take
$0<\epsilon<\epsilon_0$ small enough so that $\|C(\epsilon)\|<R$. Then
$Q_{\epsilon}(0)=C(\epsilon)$ and $\|Q_{\epsilon}(0)-0\|=\|C(\epsilon)\|<R$.
Therefore, $0\in B_R(Q_{\epsilon}(0)),$ and so there exists $u\in
\mathbb{R}^{2n}$ such that $Q_{\epsilon}(u)=0$. For such $u$, let
$g(x)=g_{\epsilon}(x)+\sum_{m=1}^{2n}u_mh_m(x)$. Then,
\begin{equation}
\int_0^1f_j(x)e^{ig(x)}dx=0
\end{equation}
for $1\le j\le n$, as desired. Note that $g\in
C_c^{\infty}((0,1);\mathbb{R})$ since $g_{\epsilon}\in
C_c^{\infty}((0,1);\mathbb{R})$ for $\epsilon>0$, and we chose $h_k\in
C_c^{\infty}((0,p);\mathbb{R})$ and $h_{n+k}\in
C_c^{\infty}((p,1);\mathbb{R})$.
\end{proof}
To complete the proof of Theorem \ref{thm: 2}, it is necessary to prove
Proposition \ref{prop: 1}, which was used in the proof above.

\begin{proposition}\label{prop: 1}
If the functions satisfy the conditions of Theorem \ref{thm: 2}, there exist
real-valued $h_1,\dots,h_n\in C_c^{\infty}((0,p);\mathbb{R})$,
$h_{n+1},\dots,h_{2n}\in C_c^{\infty}((p,1);\mathbb{R})$ such that
$\det{DQ_0(0)}\ne0$.
\end{proposition}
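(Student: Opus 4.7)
\textbf{The plan} is to exploit the block-diagonal structure of $DQ_0(0)$: it suffices to exhibit $h_1,\dots,h_n\in C_c^\infty((0,p);\mathbb{R})$ for which $A$ is invertible and, independently, $h_{n+1},\dots,h_{2n}\in C_c^\infty((p,1);\mathbb{R})$ for which $B$ is invertible. Because $e^{ig_0}$ is real (values $\pm 1$) on $(0,p)$ and imaginary (values $\pm i$) on $(p,1)$, both matrices reduce to real bilinear pairings: writing $\sigma_A(x):=e^{ig_0(x)}$ on $(0,p)$ and $\sigma_B(x):=-i\,e^{ig_0(x)}$ on $(p,1)$ (each taking values in $\{\pm 1\}$), one finds $A_{jk}=\int_0^p f_j\,\sigma_A\,h_k\,dx$ and $B_{jk}=-\int_p^1 f_j\,\sigma_B\,h_{n+k}\,dx$. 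In particular, real-valued test functions $h$ suffice and the two blocks may be treated independently.

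The task is therefore reduced to the following real-variable fact, to be applied on each of the two subintervals: if $\psi_1,\dots,\psi_n\in L^1(I;\mathbb{R})$ are linearly independent on an open interval $I\subset\mathbb{R}$, then there exist $h_1,\dots,h_n\in C_c^\infty(I;\mathbb{R})$ with $\int_I \psi_j\,h_k\,dx=\delta_{jk}$. I intend to apply this with $\psi_j:=f_j\sigma_A$ on $(0,p)$ and with $\psi_j:=-f_j\sigma_B$ on $(p,1)$; linear independence passes from the $f_j$ to these $\psi_j$ because $\sigma_A$ and $\sigma_B$ are nonvanishing almost everywhere, so any linear relation among the $\psi_j$ pulls back to the same relation among the $f_j$.

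To prove the real-variable fact, consider the linear map $T\colon C_c^\infty(I;\mathbb{R})\to\mathbb{R}^n$ given by $T(h)=(\int_I\psi_1\,h\,dx,\dots,\int_I\psi_n\,h\,dx)$. If the range of $T$ were a proper subspace of $\mathbb{R}^n$, some nonzero $c=(c_1,\dots,c_n)$ would annihilate it, so that $\int_I (\sum_j c_j\psi_j)\,h\,dx=0$ for every $h\in C_c^\infty(I;\mathbb{R})$; the fundamental lemma of the calculus of variations then forces $\sum_j c_j\psi_j=0$ almost everywhere on $I$, contradicting linear independence. Hence $T$ is surjective, and any choice of preimages $h_k\in T^{-1}(e_k)$ yields the desired identity $\int_I\psi_j h_k\,dx=\delta_{jk}$. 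Applied to the two situations above, this makes $A=I$ and $B=I$, whence $\det DQ_0(0)=\det A\cdot\det B=1\ne 0$.

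There is no deep obstacle here; the only care required is in correctly bookkeeping the factor of $i$ appearing in $\partial F_0^j/\partial u_k$ together with the piecewise-constant sign $e^{ig_0}$, which must be absorbed into the auxiliary functions $\psi_j$ so that the reduction to a problem about real-valued test functions on $(0,p)$ and $(p,1)$ is clean. Once this is done, the surjectivity argument is entirely standard and exploits precisely the hypothesis of Theorem~\ref{thm: 2} that the $f_j$ are linearly independent on each of the two subintervals.
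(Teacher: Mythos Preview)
Your proof is correct and shares with the paper the reduction to the two blocks $A$ and $B$ via the block-diagonal structure of $DQ_0(0)$ and the observation that $e^{ig_0}$ is real on $(0,p)$ and imaginary on $(p,1)$. Where you depart from the paper is in the linear-algebra core: the paper argues by contradiction, assuming $\det A=0$ for \emph{every} choice of $h_1,\dots,h_n$, expanding along the first column to get $\sum_j C_{j,1}\int_0^p f_j h_1 e^{ig_0}\,dx=0$ for all $h_1$, and then invoking the fundamental lemma to obtain a nontrivial linear relation among the $f_j$ on $[0,p]$. You instead argue directly, showing the pairing map $T\colon C_c^\infty(I)\to\mathbb{R}^n$ is surjective and then choosing $h_k\in T^{-1}(e_k)$ to force $A=I$ (and similarly $B=I$). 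Your route is a bit cleaner and more constructive---it pins down specific matrices rather than merely asserting nonsingularity---while the paper's cofactor argument is the classical ``if every determinant vanishes then the functions are dependent'' move; both rest on exactly the same hypothesis (linear independence on each subinterval) and the same fundamental lemma, so the difference is one of packaging rather than substance.
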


\begin{proof}
Our proof is by contradiction. Suppose that $\det{DQ_0(0)}=0$ for every
$h_1,\dots,h_n\in C_c^{\infty}((0,p);\mathbb{R})$, $h_{n+1},\dots,h_{2n}\in
C_c^{\infty}((p,1);\mathbb{R})$. Then, either $\det{A}=0$ for every $h_1$,
$\dots$, $h_n$ or $\det{B}=0$ for every $h_{n+1},\dots,h_{2n}$; otherwise,
if $\det{A}\ne0$ for some $h_1,\dots,h_n$ and $\det{B}\ne0$ for some
$h_{n+1},\dots,h_{2n}$, then $\det{DQ_0(0)}=\det{A}\det{B}\ne0$ for such
$h_1,\dots,h_{2n}$, which is a contradiction. Having assumed linear
independence of the functions $f$
both to the left of $p$ and to the right of $p$, let us assume, without loss
of generality, that $\det{A}=0$ for every $h_1,\dots,h_n$. We will show that
there exist $a_1,\dots,a_n\in\mathbb{R}$ (not all $0$) such that
$\sum_{j=1}^na_jf_j(x)=0$ almost everywhere on $[0,p]$.

Suppose we let $h_1$ vary and fix the rest of the functions $h$.
Then, for all $h_1$, we have
\begin{equation}\label{eqn4}
\sum_{j=1}^nC_{j,1}\int_0^pf_j(x)h_1(x)e^{ig_0(x)} dx=\det{A}=0,
\end{equation}
where $C_{j,1}$ is the $(j,1)$-cofactor of $A$. We can assume that at least
one of the cofactors is non-zero, since otherwise we can look at a smaller
matrix. Note that these cofactors depend only on $h_2,\dots,h_n$, and not on
$h_1$. Since \eqref{eqn4} holds for all $h_1\in
C_c^{\infty}((0,p);\mathbb{R})$,
\begin{equation}\label{eqn5}
\sum_{j=1}^nC_{j,1}f_j(x)=0
\end{equation}
almost everywhere on $[0,p]$, which contradicts independence.
\end{proof}
We now use Theorem \ref{thm: 2} to complete the proof of Theorem \ref{thm:
1}. First, we use the following proposition to split the theorem into two
cases.

\begin{proposition}\label{prop: p2}
Given $n$ Lebesgue-measurable functions on $[0,1]$, exactly one of the two
following cases holds:
\begin{enumerate}
\item There exists $p\in(0,1)$ such that $f_1,\dots,f_n$ are linearly
independent
almost everywhere on $[0,p]$ and on $[p,1]$.
\item There exists $q\in[0,1]$ such that $f_1,\dots,f_n$ are linearly
dependent almost everywhere on $[0,q]$ and on $[q,1]$.
\end{enumerate}
\end{proposition}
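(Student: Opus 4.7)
The plan is to reformulate (in)dependence in terms of two families of linear subspaces of $\mathbb{R}^n$ parametrized by $t \in [0,1]$, exploit their monotonicity, and reduce the dichotomy to the gap structure of these families. For $t \in [0,1]$, set
\[ V(t) := \{a \in \mathbb{R}^n : \textstyle\sum_j a_j f_j = 0 \text{ a.e. on } [0,t]\}, \]
\[ W(t) := \{a \in \mathbb{R}^n : \textstyle\sum_j a_j f_j = 0 \text{ a.e. on } [t,1]\}. \]
Both are linear subspaces of $\mathbb{R}^n$; $V$ is non-increasing and $W$ is non-decreasing in $t$, with $V(0) = W(1) = \mathbb{R}^n$. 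Linear independence a.e.\ on $[0,p]$ is equivalent to $V(p) = \{0\}$, and similarly for $W$ on $[p,1]$. So case (1) asserts the existence of $p \in (0,1)$ with $V(p) = W(p) = \{0\}$; case (2) asserts the existence of $q \in [0,1]$ with $V(q) \ne \{0\}$ and $W(q) \ne \{0\}$.

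Mutual exclusivity is immediate from monotonicity: if $p$ witnesses (1) and $q$ witnesses (2), then $q \le p$ gives $V(q) \supseteq V(p) = \{0\}$ while $q > p$ gives $W(q) \supseteq W(p) = \{0\}$, each contradicting (2). For exhaustivity, let $A := \{t : V(t) \ne \{0\}\}$ and $B := \{t : W(t) \ne \{0\}\}$, and set $a := \sup A$, $b := \inf B$. Then $A$ is a down-set and $B$ an up-set, with $0 \in A$ and $1 \in B$. The key technical step is to show $a \in A$ and $b \in B$. For $V$, this follows from the identity $\bigcap_{s<t} V(s) = V(t)$ — which holds because $\bigcup_{s<t}[0,s] = [0,t)$ differs from $[0,t]$ only on the measure-zero set $\{t\}$ — combined with the fact that any decreasing chain of finite-dimensional subspaces of $\mathbb{R}^n$ eventually stabilizes. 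Hence $V$ is left-continuous in $t$, and symmetrically $W$ is right-continuous, so $V(a) \ne \{0\}$ and $W(b) \ne \{0\}$, i.e.\ $A = [0,a]$ and $B = [b,1]$.

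Now suppose case (1) fails. Then $(0,1) \subseteq A \cup B = [0,a] \cup [b,1]$, whose complement in $[0,1]$ is the open interval $(a,b)$ (empty if $a \ge b$). Since $0 \le a, b \le 1$, a nonempty $(a,b)$ would already lie in $(0,1)$ and be uncovered; hence $a \ge b$, and any $q \in [b,a]$ (for instance $q = a$) satisfies $V(q), W(q) \ne \{0\}$, establishing case (2). Conversely, when $a < b$, any $p \in (a,b) \subseteq (0,1)$ gives $V(p) = W(p) = \{0\}$, establishing case (1).

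The main obstacle I anticipate is precisely the attainment of the suprema $a$ and $b$. Without left-continuity of $V$ and right-continuity of $W$ one could in principle have $V(s) \ne \{0\}$ for every $s < a$ yet $V(a) = \{0\}$, which would prevent a single $q$ from simultaneously witnessing case (2) in the borderline situation $a = b$. The measure-zero slack between $[0,t)$ and $[0,t]$ is exactly what allows the finite-dimensional-chain stabilization argument to supply the needed continuity.
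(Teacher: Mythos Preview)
Your approach is essentially identical to the paper's: your $a$ and $b$ are exactly the paper's $L$ and $R$, and your attainment argument via stabilization of a decreasing chain of finite-dimensional subspaces is the same as theirs. The paper only argues exhaustivity and leaves mutual exclusivity implicit, so your treatment is slightly more complete.

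One slip to fix: in your mutual-exclusivity line, the inclusions go the wrong way to yield a contradiction. From $q\le p$ you correctly get $V(q)\supseteq V(p)=\{0\}$, but that says nothing; you should instead use $W$: $q\le p$ and $W$ non-decreasing give $W(q)\subseteq W(p)=\{0\}$, contradicting $W(q)\ne\{0\}$. Symmetrically, for $q>p$ use $V(q)\subseteq V(p)=\{0\}$. With this correction the argument is complete.
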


\begin{proof}
To see that one of these cases holds, consider
\begin{align*}
L&\!=\sup\Big\{x_0\in[0,1]\!:\!\!\sum_{j=1}^na_jf_j(x)=0\mbox{ a.e. on
$[0,x_0]$ for some }(a_1,\dots,a_n)\ne0\Big\}\\
R&\!=\inf\Big\{x_0\in[0,1]\!:\!\!\sum_{j=1}^nb_jf_j(x)=0\mbox{ a.e. on
$[x_0,1]$ for some }(b_1,\dots,b_n)\ne0\Big\}.
\end{align*}
Note that the extremizers $L$
and $R$ are actually reached; in other words,
there exists $(a_1,\dots,a_n)$ such that $\sum_{j=1}^na_jf_j(x)=0$ on
$[0,L]$, and similarly for $[R,1]$. This is because, for every $x_0<L$, the
set of relations $(a_1,\dots,a_n)$ such that $\sum_{j=1}^na_jf_j(x)=0$ on
$[0,x_0]$ is a vector space whose dimension is positive and finite. As $x$
increases, this dimension can only decrease, since if
$\sum_{j=1}^na_jf_j(x)=0$ on $[0,x_0]$, then this is also true on $[0,y]$
for any $y<x_0$. Now consider the relations that form a basis for this
vector space. Since the dimension of the vector space only decreases, one of
these relations must hold until $L$, which means that a relationship holds
almost everywhere on $[0,L]$ as desired.

To prove the claim that either Case 1 or Case 2 must hold, note that if
$L<R$, then Case 1 holds for any $p$ satisfying $L<p<R$; and if $L\ge R$,
then Case 2 holds for any $q$ satisfying $R\le q\le L$.
\end{proof}
We prove Theorem \ref{thm: 1} by induction on $n$, the number of functions.
If $n=1$, then the linear independence condition of Theorem \ref{thm: 2}
holds for some $p$ because $f_1$ is not identically zero.

Suppose that Theorem \ref{thm: 1} holds for $n-1$ functions on $[0,1]$.
Consider $n$ functions $f_1,\dots,f_n$. By Proposition \ref{prop: p2}, this
set of functions belongs either to Case 1 or Case 2. Note that Case 1 is
exactly appropriate for Theorem \ref{thm: 2}, and so Theorem \ref{thm: 1}
holds in this case.

Now suppose that the $n$ functions belong to Case 2. Therefore, there are no
more than $n-1$ linearly independent functions on both sides of $q$ (even if
$q=0$ or $1$). By the induction hypothesis, we can apply Theorem \ref{thm:
1} to $[0,q]$ and to $[q,1]$ (via change-of-variables). Then, there exist
$g_1\in C_c^{\infty}((0,q);\mathbb{R})$ and $g_2\in
C_c^{\infty}((q,1);\mathbb{R})$ such that
\begin{equation}  
\int_0^qf_j(x)e^{ig_1(x)}dx=0 \quad \mbox{for }1\le j \le n
\end{equation}
and
\begin{equation}
\int_q^1f_j(x)e^{ig_2(x)}dx =0 \quad \mbox{for }1\le j\le n.
\end{equation}
Let $g:[0,1]\mapsto[0,1]$ be such that
$g\negthickspace\upharpoonright_{[0,q]}=g_1$ and
$g\negthickspace\upharpoonright_{[q,1]}=g_2$. Then, we have
\begin{equation}
\int_0^1f_j(x)e^{ig(x)}dx=0 \quad \mbox{for }1\le j\le n
\end{equation}  
as desired. Note that $g\in C_c^{\infty}((0,1);\mathbb{R})$ because
$g_1(q)=g_2(q)=0$
and all derivatives of $g_1,g_2$ are zero at $q$ (by the compact support
condition for $g_1$ and $g_2$). Therefore, Theorem \ref{thm: 1} holds by
induction.\hfill \qedsymbol

\bibliographystyle{amsalpha}

\end{document}